\newtheorem{theorem}{Theorem}[section]
\newtheorem{theo}[theorem]{Theorem}
\newtheorem{lem}[theorem]{Lemma}
\newtheorem{rem}[theorem]{Remark}
\newtheorem{exam}[theorem]{Example}
\makeatletter \@addtoreset{equation}{section}
\newcommand\mmm[1]{operations in $#1$}
\newcommand\xmod[1]{\pmod{\langle #1 \rangle}}
\newcommand\Rsta[1]{\langle #1 \rangle_R}
\newcommand\BRsta[1]{\Big\langle #1 \Big\rangle_R}
\DeclareMathOperator*{\res}{res}
\newcommand{\QQ}{\mathbb{Q}}
\newcommand{\CC}{\mathbb{C}}
\def\a{\boldsymbol{a}}
\def\e{\mathrm{e}}
\def\P{\mathcal{P}}
\def\B{\mathcal{B}}
\def\W{\mathcal{W}}
\def\V{\mathcal{V}}
\def\hM{\widehat{M}}
\def\hPhi{\widehat{\Phi}}
\title{A polynomial time algorithm for Sylvester waves when entries are bounded}
\author{Guoce Xin$^{1}$ and Chen Zhang$^{2,*}$}
\address{ $^{1,2}$School of Mathematical Sciences, Capital Normal University, Beijing 100048, PR China}
\email{$^1$\texttt{guoce\_xin@163.com}\ \& $^2$\texttt{ch\_enz@163.com}}
\date{\today}
\thanks{$*$ Corresponding author.}
\begin{document}

\begin{abstract}
The Sylvester's denumerant \( d(t; \boldsymbol{a}) \) is a quantity that counts the number of nonnegative integer solutions to the equation \( \sum_{i=1}^{N} a_i x_i = t \), where \( \boldsymbol{a} = (a_1, \dots, a_N) \) is a sequence of distinct positive integers with \( \gcd(\boldsymbol{a}) = 1 \). We present a polynomial time algorithm in $N$ for computing \( d(t; \boldsymbol{a}) \) when \( \boldsymbol{a} \) is bounded and \( t \) is a parameter.
The proposed algorithm is rooted in the use of cyclotomic polynomials and builds upon recent results by Xin-Zhang-Zhang on the efficient computation of generalized Todd polynomials. The algorithm has been implemented in \texttt{Maple} under the name \texttt{Cyc-Denum} and demonstrates superior performance when \( a_i \leq 500 \) compared to Sills-Zeilberger's \texttt{Maple} package \texttt{PARTITIONS}.
\end{abstract}

\maketitle

\noindent
\begin{small}
 \emph{Mathematic subject classification}: Primary 05--08; Secondary 05--04, 05A17.
\end{small}

\noindent
\begin{small}
\emph{Keywords}: Sylvester's denumerant, Sylvester wave; Ehrhart quasi-polynomial; cyclotomic polynomial.
\end{small}

\section{Introduction}
For a positive integer sequence $\a = (a_1, \dots, a_{N})$ with $\gcd(\a) = 1$ and a nonnegative integer $t$,
the Sylvester's \emph{denumerant} \cite{1857-Sylvester-Wave}, denoted by $d(t; \a)$, is a function that enumerates the number of nonnegative integer solutions to  $\sum_{i=1}^{N} a_i x_i = t$. This concept has been the subject of extensive research, and a notable result states that $d(t; \a)$ is a quasi-polynomial in $t$ of degree $N-1$.

In 2015, Baldoni et al. \cite{2015-Baldoni-Denumerant} gave a polynomial time algorithm for computing the top $k$ coefficients when $k$ is fixed. Meanwhile, they developed a \texttt{Maple} package \texttt{M-Knapsack}, as well as a \texttt{C++} package \texttt{LattE knapsack}.
In 2023 \cite{2023-denumerant}, we introduced our own \texttt{Maple} package \texttt{CT-Knapsack}, which has a significant speed advantage over \texttt{M-Knapsack}. The experimental results can be found in \cite[Section 5]{2023-denumerant}.
However, both algorithms of Baldoni et al.'s and our own rely on Barvinok's unimodular cone decomposition.
The computations of the bottom coefficients of $d(t; \a)$ become challenging when $N$ is large, as it necessitates the decomposition of high dimensional cones.
This issue was addressed in our previous work \cite{2023-denumerant}.
Then we take note of Sills and Zeilberger's work in 2012 \cite{2012-Zeilberger}. They provided a \texttt{Maple} package \texttt{PARTITIONS}, which can compute $p_k(t)$ for $k$ up to $70$, where $p_k(t) = d(t; 1, 2, \dots, k)$. But it already takes $83.4$ seconds for \texttt{CT-Knapsack} to compute $p_{12}(t)$. The computation of
$p_{70}(t)$ is out of reach even for \texttt{LattE Knapsack}.

The main idea of Sills and Zeilberger's \texttt{PARTITIONS} is deriving the formula for $d(t; \a)$ with the aid of the partial fraction decomposition of the generating function. This idea dates back at least to Cayley \cite{1856-Cayley}. They ask \texttt{Maple} to convert the generating function
\begin{equation}\label{e-generatingfunc}
\sum_{t\ge 0} d(t; \a) q^t = \frac{1}{\prod_{i=1}^{N} (1 - q^{a_i})}
\end{equation}
into partial fractions. Then for each piece, \texttt{Maple} finds the first few terms of the Maclaurin expansion and then fits the data with an appropriate quasi-polynomial using \emph{undetermined coefficients}. The output is the list of these quasi-polynomials whose sum is the desired expression for $d(t; \a)$. See \cite[Section 3]{2012-Zeilberger} for details.

Our motivation for this work is to give an efficient algorithm for the $d(t; \a)$ for when $\a$ has small elements ($\le 500$) but $N$ is large.
We introduce an algorithm, called \texttt{Cyc-Denum} (See Algorithm \ref{alg-Wave}), which is capable of computing $d(t; 1, \dots, k)$ for any $1 \le k \le 117$ in $1$ minute.
The algorithm is so named since we heavily use the \emph{$f$-th cyclotomic polynomial}, which is denoted by
\[
\Phi_f(x) := \prod_{\zeta \in \P_f} (x - \zeta),
\]
where $\P_f$ denotes the set of all $f$-th primitive roots of unity, i.e.,
\[
\P_f = \{\zeta : \zeta=\e^{\frac{2j\pi\sqrt{-1}}{f}} \text{ with } 1 \le j \le f \text{ and } \gcd(j,f)=1 \}.
\]

Another algorithm, named \texttt{Float-Denum} (See Algorithm \ref{alg-float}), introduced in this paper is based on floating computations, which can compute $d(t; 1, \dots, k)$ for any $1 \le k \le 146$ in $1$ minute. In contrast, \texttt{PARTITIONS} limited to computations for $k$ to $58$ in $1$ minute. This comparison underscores the enhanced efficiency and broader scope of our two algorithms in the context of denumerant. The detailed experiments are reported in Subsection \ref{ss-expri}.

It is well-known \cite{2022-Uday} that
\begin{equation}\label{e-Denum}
d(t; \a) = \sum_{f} W_f (t; \a),
\end{equation}
where $f$ ranges over all divisors of $a_1, \dots, a_{N}$, and $W_f (t; \a)$ is called a \emph{Sylvester wave} and descried as a \emph{quasi-polynomial} in $t$ of period $f$ for each $f$.
A \emph{quasi-polynomial of period $f$} is a function which can be written as $F(t) = [P_1(t), \dots, P_f(t)]$, where $P_i(t)$'s are polynomials in $t$ and $F(t)=P_i(t)$ if $t \equiv i \pmod f$.
Note that we treat $t \equiv 0 \pmod f$ as $t \equiv f \pmod f$.
For more research on denumerant and Sylvester wave, see e.g. \cite{2003-Andrews, 2002-Agnarsson, 2018-Aguilo, 2002-Fel, 2018-O’Sullivan, 2019-O’Sullivan, 1995-Lisonek}.

\begin{exam}\label{ex-136}
Let $\a = (1,3,6)$, then
\[
d(t; \a) = W_1(t; \a) + W_2(t; \a) + W_3(t; \a) + W_6(t; \a),
\]
where

\begin{align*}
W_1(t; \a) &= \left[ \frac{t^2}{36} + \frac{5 \ t}{18} + \frac{127}{216} \right],  &
W_2(t; \a) &= \left[ -\frac{1}{24}, \frac{1}{24} \right],  \\
W_3(t; \a) &= \left[ -\frac{1}{54}, -\frac{t}{18} - \frac{29}{108}, \frac{t}{18} + \frac{31}{108}\right],  &
W_6(t; \a) &= \left[ \frac{1}{6}, \frac{1}{12}, -\frac{1}{12}, -\frac{1}{6}, -\frac{1}{12}, \frac{1}{12} \right].
\end{align*}
Taking $t=14$ gives
\[
W_1(14; \a) =\frac{2143}{216}, \quad W_2(14; \a) = \frac{1}{24}, \quad  W_3(14; \a)=-\frac{113}{108}, \quad \text{and } \; W_6(14; \a) = \frac{1}{12}.
\]
Hence $d(14; \a) = 9$. It is easy to verify that there are exactly $9$ nonnegative integer solutions to $x_1 + 3 x_2 + 6 x_3 = 14$.
\end{exam}

Generally, we have \cite{1857-Sylvester-Wave}
\begin{equation}\label{e-Wave-1}
W_f (t; \a)
= -\res_{s=0}\sum_{\zeta \in \P_f} \frac{\zeta^{-t} \e^{-ts}}{\prod_{i=1}^{N} (1 - \zeta^{a_i} \e^{a_i s})},
\end{equation}
where $\res_{s=s_0}F(s)$ denotes the residue of $F(s)$ when expanded as a Laurent series at $s=s_0$. More precisely, $\res_{s=s_0} \sum_{i \ge i_0} c_i (s-s_0)^i = c_{-1}$.

The structure of this paper is as follows. In Section \ref{s-computation},
we commence by introducing the log-exponential trick, a key technique for the computation of generalized Todd polynomials \cite{2023-Xin-GTodd}. This trick serves as a fundamental tool in the computation of $W_f(t; \a)$. Subsequently, we delineate the computation scheme for $W_f(t; \a)$ when $f$ is given, and we present Algorithm \texttt{Cyc-Denum} in detail. The complexity analysis of this scheme is provided in Theorem \ref{theo-ca}.
In Section \ref{s-float}, we give another scheme with better performance for computing $W_f(t; \a)$ by converting the $f$-th roots of unity $\zeta$'s into floating numbers. We also provide the corresponding algorithm for this scheme, along with its complexity analysis. Section \ref{s-cr} is the concluding remark.

\section{Computation of $W_f(t; \a)$}\label{s-computation}
In this section, we provide a succinct overview of the log-exponential trick, as introduced by Xin et al. \cite{2023-Xin-GTodd}, for the efficient computation of generalized Todd polynomials. We utilize this trick to compute $W_f(t; \a)$ for a fixed $f$. Additionally, we present Algorithm \texttt{Cyc-Denum}, which has been implemented in \texttt{Maple}.

\subsection{Two Tools}
The first tool we use is a straightforward lemma that serves as a fundamental result for our calculations.
\begin{lem}\label{lem-sumpunits}
Let $f$ be a positive integer, and let $F(x)$ be a rational function in $x$. Suppose $F(\zeta)$ exists for all $\zeta$ satisfying $\zeta^f=1$. Then
\[
\sum_{\zeta \in \P_f} F(\zeta) = \frac{1}{f} \sum_{\zeta: \zeta^f=1} \zeta F(\zeta) \hPhi_f(\zeta) \Phi'_f(\zeta), \qquad
\text{where }
\hPhi_f(x) = \frac{x^f-1}{\Phi_f(x)}.
\]
\end{lem}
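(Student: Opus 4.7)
The plan is to manipulate the right-hand sum term-by-term so that each primitive $f$-th root of unity contributes exactly $fF(\zeta)$ while every non-primitive $f$-th root of unity contributes $0$; dividing by $f$ then recovers the left-hand side. The whole argument rests on two standard facts about cyclotomic polynomials: the global factorization $x^f-1=\prod_{d\mid f}\Phi_d(x)$, and the immediate consequence $x^f-1=\Phi_f(x)\hPhi_f(x)$ that defines $\hPhi_f$.

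First I would differentiate the identity $x^f-1=\Phi_f(x)\hPhi_f(x)$ to obtain
\[
f x^{f-1} = \Phi_f'(x)\hPhi_f(x) + \Phi_f(x)\hPhi_f'(x).
\]
For $\zeta\in\P_f$ the cyclotomic polynomial vanishes, $\Phi_f(\zeta)=0$, so this collapses to $\hPhi_f(\zeta)\Phi_f'(\zeta)=f\zeta^{f-1}$. Multiplying by $\zeta$ and using $\zeta^f=1$ yields $\zeta\,\hPhi_f(\zeta)\Phi_f'(\zeta)=f$, so the corresponding summand on the right equals $fF(\zeta)$, as desired.

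Next I would deal with the $\zeta$ satisfying $\zeta^f=1$ but $\zeta\notin\P_f$. The cyclotomic factorization rewrites $\hPhi_f$ as $\hPhi_f(x)=\prod_{d\mid f,\,d<f}\Phi_d(x)$, and any such $\zeta$ is a primitive $d$-th root of unity for some proper divisor $d$ of $f$, forcing $\Phi_d(\zeta)=0$ and hence $\hPhi_f(\zeta)=0$. That annihilates the summand. Combining the two cases, the right-hand side becomes $\tfrac{1}{f}\sum_{\zeta\in\P_f} fF(\zeta)=\sum_{\zeta\in\P_f}F(\zeta)$, which is the claim. No real obstacle arises: the regularity hypothesis on $F$ ensures every term is well defined, and the only nontrivial ingredient is the observation that $\hPhi_f$ absorbs precisely the non-primitive $f$-th roots of unity as simple zeros, which is immediate from the cyclotomic factorization.
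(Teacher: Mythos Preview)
Your proof is correct and follows essentially the same approach as the paper: both show that non-primitive $f$-th roots of unity contribute zero via $\hPhi_f(\zeta)=0$, and that primitive roots contribute $fF(\zeta)$ because $\zeta\,\hPhi_f(\zeta)\Phi_f'(\zeta)=f$. The only difference is cosmetic---you obtain this last identity by differentiating $x^f-1=\Phi_f(x)\hPhi_f(x)$ via the product rule, whereas the paper reaches the same conclusion through L'H\^opital's rule applied to $\lim_{x\to\zeta}\frac{x-\zeta}{\Phi_f(x)}=\frac{1}{\Phi_f'(\zeta)}$; your version is arguably the cleaner of the two.
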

\begin{proof}
Note that both $\hPhi_f(x)$ and $\Phi'_f(x)$ are polynomials in $x$, and $\hPhi_f(\zeta) = 0$ for any $\zeta \in \{\zeta : \zeta^f = 1\} \setminus \P_f$. A direct computation using L'Hôpital's rule yields
\begin{align*}
\frac{1}{f}\sum_{\zeta : \zeta^f = 1} \zeta F(\zeta) \hPhi_f(\zeta) \Phi'_f(\zeta)
&= \sum_{\zeta \in \P_f} \zeta F(\zeta) \hPhi_f(\zeta) \Phi'_f(\zeta) \zeta^{f-1} \lim_{x=\zeta}\frac{x - \zeta}{x^f - 1} \\
&= \sum_{\zeta \in \P_f} F(\zeta) \Phi'_f(\zeta) \lim_{x=\zeta} \hPhi_f(x) \frac{x - \zeta}{x^f - 1} \\
&=\sum_{\zeta \in \P_f} F(\zeta) \Phi'_f(\zeta) \lim_{x=\zeta} \frac{x - \zeta}{\Phi_f(x)} \\
&= \sum_{\zeta \in \P_f} F(\zeta) \Phi'_f(\zeta) \frac{1}{\Phi'_f(\zeta)} \\
&= \sum_{\zeta \in \P_f} F(\zeta),
\end{align*}
as desired.
\end{proof}

The second tool involves the computation of generalized Todd polynomials using the log-exponential trick, a concept introduced by Xin et al. in \cite{2023-Xin-GTodd}. Subsequently, we will consistently use the notation \( R = \QQ(\zeta) \), where \(\zeta\) is a primitive \(f\)-th root of unity, with \( R \) collapsing to \( \QQ \) when \(\zeta = 1 \). This is adequate for our objectives.

Consider \( a \in \QQ \) and let \( B_0, \bar B_0, B_1, \bar B_1, \dots, B_r, \bar B_r \) be finite multi-sets of nonzero integers. The generalized Todd polynomials, denoted as \( gtd_d \), are defined by their generating function:
\[
F(s) = \sum_{d\ge 0} gtd_d s^d = \e^{as} \frac{\prod_{b \in B_0} g(bs)}{\prod_{b \in \bar B_0} g(bs)} \prod_{i=1}^{r}\frac{\prod_{b \in B_i} g(bs, y_i)}{\prod_{b \in \bar B_i} g(bs, y_i)},
\]
where
\[
g(s) = \frac{s}{\e^s - 1} = 1 + o(1), \qquad g(s, y) = \frac{1}{1 - y(\e^s-1)} = 1 + o(1).
\]
For our purposes, we focus on the scenario where \( \bar B_i \) is empty and \( y_i \in R \) for all \( i \). Thus, the generating function simplifies to:
\begin{equation}\label{e-gtd}
F(s) = \sum_{d\ge 0} gtd_d s^d = \e^{as} \prod_{b \in B_0} g(bs) \prod_{i=1}^{r}\prod_{b \in B_i} g(bs, y_i).
\end{equation}

The log-exponential trick involves first calculating \( H(s) = \ln(F(s)) \) using expansion formulas:
\begin{equation}\label{e-hexpansion}
h(s) = \ln g(s) = -\sum_{k \ge 1} \frac{\B_k}{k \cdot k!} s^k, \qquad
h(s, y) = \ln g(s, y) = \sum_{k \ge 1} C_k(y) s^k,
\end{equation}
and subsequently computing \( \e^{H(s)} \). Here, the \(\B_k\)'s are the Bernoulli numbers, and \( C_k(y)\) are polynomials in $y$ that we will not use in this paper. Xin et al. \cite{2023-Xin-GTodd} have provided a thorough examination of the computations involving logarithms and exponentials, along with complexity analysis. Their analysis was conducted under the assumption that the \( y_i \)'s are variables. We revisit this complexity analysis, focusing on the scenario where \( y_i \in R \) for all \( i \).

The complexities are quantified by the number of operations in the ring \( R \).
We also require several results that have been established \cite{2023-Xin-GTodd}.

A commutative ring \( R \) is said to support fast Fourier transform (FFT) if it contains a primitive \( 2^k \)th root of unity for any nonnegative integer \( k \). We will consistently assume that FFT is feasible.
\begin{lem}[\cite{2023-Xin-GTodd}]\label{lem-log}
Let \( d= 2^\ell \) for a positive integer \( \ell \). Given \( G(s) \xmod{s^d} \in R[s] \) with \( G(0)=1 \), then \( \ln G(s) \xmod{s^{d}} \) can be computed using \( O(d \log d) \) operations in \( R \).
\end{lem}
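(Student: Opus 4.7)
The plan is to use the identity $\ln G(s) = \int_0^s G'(u)/G(u)\, du$, which reduces the problem to four subtasks: (i) differentiating $G$, (ii) computing the formal power series inverse $G(s)^{-1} \pmod{s^d}$, (iii) multiplying $G'(s)$ by that inverse, and (iv) integrating termwise with constant term $\ln G(0) = \ln 1 = 0$. Steps (i) and (iv) are plainly $O(d)$ operations, and the termwise integration is legitimate since $R \supseteq \QQ$ in all cases considered in the paper ($R = \QQ$ or $R = \QQ(\zeta)$).

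The computational work is concentrated in step (ii). I would use Newton iteration: starting from $U_0 = 1 = G(0)^{-1}$, set
\[
U_{k+1} \equiv U_k\bigl(2 - G(s)\, U_k\bigr) \pmod{s^{2^{k+1}}}, \qquad k = 0, 1, \dots, \ell - 1.
\]
The standard quadratic-convergence check shows that if $G U_k \equiv 1 \pmod{s^{2^k}}$, then $1 - G U_{k+1} \equiv (1 - G U_k)^2 \equiv 0 \pmod{s^{2^{k+1}}}$, so $U_\ell$ is the desired inverse modulo $s^d$. Each iteration costs two truncated polynomial multiplications at precision $2^{k+1}$, which, under the FFT hypothesis on $R$, can be performed in $O(2^k \cdot k)$ ring operations. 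Summing over $k$ yields a geometric-type series dominated by the final term, giving total cost $O(d \log d)$ for the inversion. Step (iii) is then a single FFT-based product truncated at $s^{d-1}$, at a further cost of $O(d \log d)$.

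The main obstacle is precisely the cost accounting for the Newton iteration: one must verify that the per-step costs form a geometric progression so that the total telescopes to $O(d \log d)$ rather than $O(d \log^2 d)$. This is exactly where the FFT assumption on $R$ is used essentially; without it, one would only recover softer $\widetilde{O}(d)$ bounds via Karatsuba-style multiplication. Once the telescoping estimate is in place, concatenating steps (i)--(iv) gives the claimed $O(d \log d)$ bound for $\ln G(s) \pmod{s^d}$.
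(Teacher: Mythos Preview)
Your argument is correct and is the standard Brent--Kung approach to formal power series logarithm: reduce to $\ln G = \int G'/G$, invert $G$ by Newton iteration at doubling precisions, multiply once, and integrate termwise (the latter using that $R\supseteq\QQ$). The cost accounting is right: under the stated FFT hypothesis each Newton step at precision $2^{k+1}$ costs $O(2^{k+1}(k+1))$ ring operations, and summing the geometric-like series $\sum_{k=0}^{\ell-1}2^{k+1}(k+1)$ is dominated by its last term, giving $O(d\log d)$.

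There is nothing to compare against here, since the paper does not supply its own proof of this lemma; it is quoted from \cite{2023-Xin-GTodd} without argument. Your write-up is exactly the proof one would expect to find in that reference (or in any standard treatment of fast power series arithmetic), so it fills the gap appropriately.
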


\begin{lem}[\cite{2023-Xin-GTodd}]\label{lem-exp}
Let \( d= 2^\ell \) for a positive integer \( \ell \). Given \( H(s)\xmod{s^{d}} \in R[s] \) with \( H(0)=0 \), \( \e^{H(s)} \xmod{s^{d}} \) can be computed in \( O(d\log(d)) \) operations in \( R \).
\end{lem}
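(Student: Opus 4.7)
The plan is to apply a Newton iteration that doubles the working precision at each step, invoking Lemma \ref{lem-log} as a subroutine. Since $H(0) = 0$, the formal power series $F = \e^H$ is the unique solution of $\Phi(F) := \ln F - H = 0$ with $F(0) = 1$, and the standard Newton update for this equation reads
\[
F_{k+1} = F_k - \frac{\Phi(F_k)}{\Phi'(F_k)} = F_k \bigl(1 + H - \ln F_k\bigr) \pmod{s^{2m}},
\]
to be started from $F_0 = 1$, which already agrees with $\e^H$ modulo $s$ because $H(0)=0$.

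The first key step is to verify the quadratic convergence property: if $F_k \equiv \e^H \pmod{s^m}$, then the update above satisfies $F_{k+1} \equiv \e^H \pmod{s^{2m}}$. I would prove this by writing $F_k = \e^H(1 + s^m u)$ for some formal power series $u \in R[[s]]$, expanding
\[
\ln F_k = H + \ln(1 + s^m u) = H + s^m u - \tfrac{1}{2} s^{2m} u^2 + \cdots,
\]
and substituting into the update; the $s^m u$ contributions cancel against those in $1 + H - \ln F_k$, leaving $F_{k+1} = \e^H \bigl(1 - \tfrac{1}{2} s^{2m} u^2 + O(s^{3m})\bigr)$, which reduces to $\e^H$ modulo $s^{2m}$.

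The second step is the cost accounting. At iteration $k$ the current precision is $m = 2^k$; one truncated logarithm $\ln F_k \pmod{s^{2m}}$ costs $O(m \log m)$ by Lemma \ref{lem-log}, and the truncated multiplication $F_k \cdot (1 + H - \ln F_k) \pmod{s^{2m}}$ costs $O(m \log m)$ via FFT-based multiplication in $R$, which is available by the standing assumption on $R$. Since $d = 2^\ell$, summing these costs over $k = 0, 1, \dots, \ell - 1$ yields a geometric series dominated by its final term, giving the claimed total of $O(d \log d)$ operations in $R$.

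The main subtlety I anticipate lies in the bookkeeping of precisions: every intermediate quantity must be reduced modulo exactly $s^{2m}$ at the right moment, so that each call to Lemma \ref{lem-log} and each FFT multiplication actually runs in time $O(m \log m)$ rather than in time proportional to the final precision $d$. Once this bookkeeping and the Newton identity above are in place, the complexity bound reduces to an immediate geometric sum and the lemma follows.
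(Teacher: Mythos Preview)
Your Newton-iteration argument is correct and is the standard way this result is proved (going back to Brent). Note, however, that the paper does not actually prove Lemma~\ref{lem-exp}: it is stated with a citation to \cite{2023-Xin-GTodd} and no proof is given here, so there is no ``paper's own proof'' to compare against. Your write-up is essentially the argument one would expect to find in the cited source.
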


\begin{lem}[\cite{2023-Xin-GTodd}]\label{lem-add-similar}
Consider a multi-set $ B $ of $ k $ elements within the ring $ R $ as previously defined. If $ H(s) \xmod{s^d}$ is provided in $ R[s]$ with $ H(0) = 0 $, then the computation of $ \sum_{b \in B} H(b s) $ can be achieved using $ O(k \log^2(d) + d\log(d)) $ \mmm{R}.
\end{lem}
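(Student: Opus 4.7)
The plan is to reduce the task to computing the Newton power sums $p_j := \sum_{b \in B} b^j$ for $j = 1, \dots, d-1$. Since $H(0)=0$, write $H(s) = \sum_{j=1}^{d-1} h_j s^j$; then
\[
\sum_{b \in B} H(b s) \equiv \sum_{j=1}^{d-1} h_j\, p_j\, s^j \xmod{s^d},
\]
so once the power sums are in hand, assembling the answer costs only $O(d)$ operations in $R$. The heart of the matter is therefore to produce $p_1, \ldots, p_{d-1}$ within the claimed budget.

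To this end, I would introduce $P(s) := \prod_{b \in B}(1 - b s) \xmod{s^d}$. A routine logarithmic-derivative identity yields
\[
-\frac{s P'(s)}{P(s)} \;=\; \sum_{b \in B} \frac{b s}{1 - b s} \;\equiv\; \sum_{j \ge 1} p_j\, s^j \xmod{s^d},
\]
so the desired $p_j$ are precisely the nonconstant coefficients of $-s P'(s)/P(s) \xmod{s^d}$. Since FFT is available in $R$, I would compute $P(s)\xmod{s^d}$ by a balanced binary product tree over the $k$ linear factors, differentiate in $O(d)$ time, and then form $-s P'(s)/P(s)\xmod{s^d}$ via one power-series inversion and one polynomial multiplication, each costing $O(d \log d)$ by standard FFT-based Newton iteration.

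The main obstacle is extracting the $O(k\log^2 d + d \log d)$ bound from the product tree, which requires careful bookkeeping of the truncation. At level $i$ the tree performs $k/2^i$ multiplications, each producing an intermediate factor of degree at most $\min(2^i, d)$ and costing $M(\min(2^i, d)) = O(\min(2^i, d)\log(\min(2^i, d)))$. Summing over the levels with $2^i \le d$ gives $\sum_{i=1}^{\lceil\log d\rceil} (k/2^i)\cdot 2^i \cdot i = O(k \log^2 d)$, while, when $k>d$, the remaining levels contribute $\sum_{i=\lceil\log d\rceil}^{\lceil\log k\rceil} (k/2^i)\cdot d\log d = O(k \log d)$. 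Together with the $O(d \log d)$ cost of the logarithmic-derivative step, this delivers the claimed $O(k\log^2 d + d\log d)$ bound.
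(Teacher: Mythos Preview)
The paper itself does not prove this lemma; it is quoted verbatim from the cited reference \cite{2023-Xin-GTodd} without argument. Your proof is correct and is the standard route (and almost certainly the one in the source): reduce $\sum_{b\in B}H(bs)$ to the coefficientwise product $h_j p_j$ with the power sums $p_j=\sum_{b\in B}b^j$, extract the $p_j$ from the logarithmic derivative of $P(s)=\prod_{b\in B}(1-bs)\xmod{s^d}$, and build $P$ by a balanced product tree truncated at degree~$d$. Your complexity bookkeeping is accurate: the tree levels with $2^i\le d$ contribute $\sum_{i\le\log d}(k/2^i)\cdot O(2^i i)=O(k\log^2 d)$, any remaining levels (when $k>d$) contribute a geometric series totaling $O(k\log d)$, and the single inversion/multiplication for $-sP'(s)/P(s)$ costs $O(d\log d)$ under the paper's standing FFT assumption.
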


\begin{lem}\label{lem-hsandhsy}
Let $h(s)$ and $h(s,y)$ with $y \in R$ be defined as in \eqref{e-hexpansion}. Then $h(s) \xmod{s^d}$ and $h(s, y) \xmod{s^d}$ can be computed using $O(d\log(d))$ operations in $R$.
\end{lem}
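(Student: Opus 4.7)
My plan is to reduce both computations to a single application of Lemma \ref{lem-log} while bypassing any explicit power-series inversion. The key observation is that $g(s) = s/(\e^s-1) = 1/P(s)$, where $P(s) := (\e^s-1)/s = \sum_{k \ge 0} s^k/(k+1)!$ satisfies $P(0)=1$, and $g(s,y) = 1/Q(s)$, where $Q(s) := 1 - y(\e^s - 1)$ satisfies $Q(0)=1$. Since $\ln(1/F) = -\ln F$ as formal power series, this gives $h(s) = -\ln P(s)$ and $h(s,y) = -\ln Q(s)$, so each reduces to a single logarithm computation on a power series whose constant term is $1$.

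For $h(s)$, I would first build $P(s) \xmod{s^d}$ by producing the coefficients $1/(k+1)!$ incrementally for $k = 0, 1, \dots, d-1$, each obtained from the previous by one scalar division; this costs $O(d)$ operations in $\QQ \subseteq R$. Then Lemma \ref{lem-log} applied to $P(s) \xmod{s^d}$ yields $\ln P(s) \xmod{s^d}$ in $O(d\log d)$ operations in $R$, and negating gives $h(s) \xmod{s^d}$ within the claimed budget.

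For $h(s,y)$, I would analogously compute $\e^s - 1 \xmod{s^d}$ from its coefficients $1/k!$ in $O(d)$ scalar operations, then form $Q(s) = 1 - y(\e^s - 1) \xmod{s^d}$ using $d-1$ multiplications by $y \in R$, for a further $O(d)$ operations in $R$. A single call to Lemma \ref{lem-log} then produces $\ln Q(s) \xmod{s^d}$ in $O(d\log d)$ operations in $R$; negating yields $h(s,y) \xmod{s^d}$.

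The only mild technicality I foresee is that Lemma \ref{lem-log} is stated for $d = 2^\ell$. For arbitrary $d$ I would pad up to the next power of two, which at most doubles $d$ and preserves the $O(d\log d)$ bound. I do not expect a substantive obstacle: both reductions sidestep power-series inversion and funnel through a single invocation of the logarithm routine, so the overall cost for each is dominated by that one call.
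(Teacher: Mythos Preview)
Your proposal is correct and essentially matches the paper's approach. For $h(s,y)$ the paper does exactly what you do, writing $h(s,y)=-\ln\bigl(1-\sum_{i\ge 1}\frac{y}{i!}s^i\bigr)$ and invoking Lemma~\ref{lem-log}; for $h(s)$ the paper simply cites \cite[Lemma~21]{2023-Xin-GTodd}, whereas you supply the natural self-contained argument $h(s)=-\ln P(s)$ with $P(s)=(\e^s-1)/s$, which is a harmless (and arguably cleaner) variation.
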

\begin{proof}
The complexity of computing $h(s) \xmod{s^d}$ is stated in \cite[Lemma 21]{2023-Xin-GTodd}, and the computation of $h(s, y) \xmod{s^d}$ can be derived from
\[
h(s, y) = \ln g(s, y) = - \ln(1 - \sum_{i \ge 1} \frac{y}{i!} s^i)
\]
and Lemma \ref{lem-log}.
\end{proof}

\begin{theo}\label{theo-GTodd}
Suppose $d$ is a positive integer.
For given $a, B_0, B_1, \dots, B_r$, let $F(s)$ be as defined in \eqref{e-gtd}.
Then we can compute the sequence $(gtd_0, gtd_1, \dots, gtd_{d-1})$ of generalized Todd polynomials using $O\Big((r+1) d \log(d) + \log^2(d) \sum_{i=0}^{r} |B_i|\Big)$ \mmm{R}.
\end{theo}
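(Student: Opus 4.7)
The plan is to apply the log-exponential trick directly to the generating function \eqref{e-gtd}, leveraging the four lemmas already at hand. First I would reduce to the case $d=2^\ell$ by replacing $d$ with the smallest power of $2$ at least as large (this at most doubles $d$ and therefore does not change the stated asymptotic cost), so that Lemmas \ref{lem-log} and \ref{lem-exp} apply.

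Next, by taking the logarithm of \eqref{e-gtd} I obtain the additive decomposition
\[
H(s) := \ln F(s) = as + \sum_{b \in B_0} h(bs) + \sum_{i=1}^{r}\sum_{b \in B_i} h(bs, y_i) \pmod{s^d},
\]
where $h(s)$ and $h(s,y)$ are as in \eqref{e-hexpansion}. The strategy is thus: (i) produce $h(s)\xmod{s^d}$ and each $h(s,y_i)\xmod{s^d}$; (ii) for each $i$, batch-evaluate the sum $\sum_{b\in B_i}$ using Lemma \ref{lem-add-similar}; (iii) add the $r+1$ resulting series together with $as$ to assemble $H(s)\xmod{s^d}$; and finally (iv) recover $F(s) = \e^{H(s)}\xmod{s^d}$ by Lemma \ref{lem-exp}, from which the coefficients $(gtd_0,\dots,gtd_{d-1})$ are read off directly.

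For the cost accounting: step (i) invokes Lemma \ref{lem-hsandhsy} once for $h(s)$ and once for each of the $r$ values $y_i$, totaling $O((r+1)\, d\log d)$ operations in $R$. Step (ii) applies Lemma \ref{lem-add-similar} once per $i \in \{0,1,\dots,r\}$ with $k=|B_i|$, contributing $O\bigl(|B_i|\log^2 d + d\log d\bigr)$ per index and hence $O\bigl(\log^2(d)\sum_{i=0}^r |B_i| + (r+1)\, d\log d\bigr)$ in total. Summing the $r+1$ truncated series in step (iii) costs $O((r+1)d)$, which is absorbed. Step (iv) adds a single $O(d\log d)$. Adding all bounds yields the claimed $O\bigl((r+1) d \log(d) + \log^2(d) \sum_{i=0}^{r} |B_i|\bigr)$.

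The only real subtlety is step (ii): we must ensure Lemma \ref{lem-add-similar} can absorb the scalar-in-$R$ parameter $y_i$, i.e.\ that the $b$-dependent series being summed is $h(bs, y_i)$ for a fixed $y_i \in R$, which is exactly the hypothesis under which Lemma \ref{lem-add-similar} is stated (it takes an $R[s]$-series $H(s)$ and sums $H(bs)$ over the multi-set). Apart from that bookkeeping, every step is a direct invocation of a previously established lemma, so no new analytic work is required.
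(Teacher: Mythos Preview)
Your proposal is correct and follows essentially the same approach as the paper: take logarithms to reduce to the additive decomposition $H(s)=as+\sum_{b\in B_0}h(bs)+\sum_{i=1}^r\sum_{b\in B_i}h(bs,y_i)$, build each summand via Lemmas~\ref{lem-hsandhsy} and~\ref{lem-add-similar}, then exponentiate with Lemma~\ref{lem-exp}, with the same cost accounting. Your explicit reduction to $d=2^\ell$ and the remark on the applicability of Lemma~\ref{lem-add-similar} to $h(\cdot,y_i)$ for fixed $y_i\in R$ are minor clarifications the paper leaves implicit.
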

\begin{proof}
Let
\[
H(s) = \ln (F(s)) = as + \sum_{b \in B_0} h(bs) + \sum_{i=1}^{r} \sum_{b \in B_i} h(bs, y_i).
\]
We proceed by constructing $H(s) \xmod{s^d}$ and subsequently computing $F(s)\equiv e^{H(s)} \xmod{ s^{d}}$, which results in the desired sequence \( (gtd_0, gtd_1, \dots, gtd_{d-1}) \).

In Step 0, we utilize Lemma \ref{lem-hsandhsy} to compute $h(s)$ using $O(d \log(d))$ \mmm{R}, and compute $h(s, y_i)$ for $1 \leq i\leq r$ using $O(r d \log(d))$ \mmm{R}.

In Step 1, we apply Lemma \ref{lem-add-similar} to compute $\sum_{b \in B_0} h(bs) \xmod{s^{d}}$ using $O\Big(d \log(d) + |B_0| \log^2(d) \Big)$ \mmm{R} and $\sum_{b \in B_0} h(bs, y_i) \xmod{s^{d}}$ for $1 \leq i\leq r$ using $O\Big(r d \log(d) + \log^2(d) \sum_{i=1}^{r} |B_i|\Big)$ \mmm{R}.

In Step 2, we invoke Lemma \ref{lem-exp} to compute $\e^{H(s)} \xmod{s^{d}}$. The operational cost for this step amounts to $O(d \log(d))$ \mmm{R}.

The total complexity is the sum of the complexities of the individual steps, leading to the claimed bound.
\end{proof}

\subsection{Computation of $W_f(t; \a)$}
Throughout this and the next subsections, we often operate within the field \( R = \QQ[x]/\langle \Phi_f(x) \rangle \).
An element \( P(x) + \langle \Phi_f(x) \rangle \) in \( R \) is represented by its \emph{standard form} \( \Rsta{ P(x)} \), which is the unique polynomial representative of degree at most \( \varphi(f)-1 \), where \( \varphi(f) = \# \{j : 1\le j \le f \text{ and } \gcd(j, f) = 1\} \) is Euler's totient function.
Specifically, \( \Rsta{ P(x)} \) is determined through two steps: i) Calculate the remainder \( Q \) of \( P \) when divided by \( x^f-1 \); ii) Calculate the remainder of \( Q \) when divided by \( \Phi_f(x) \).

The following lemma asserts that \( \frac{1}{1-\zeta} \) can be expressed as a polynomial in \( \zeta \) for any \( f \)-th root of unity \( \zeta \) (\(\neq 1\)) (hence a polynomial of standard form over \( R \) if \( \zeta \in \P_f \)).
\begin{lem}\label{lem-inver}
Suppose \( \zeta \neq 1 \) is a \( f \)-th root of unity, and \( \theta_f(x) = \sum_{i=0}^{f-1} x^i \). Then we have
\[
\frac{1}{1-\zeta} = - \frac{1}{f} \zeta \theta'_f(\zeta).
\]
\end{lem}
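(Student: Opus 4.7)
The plan is to exploit the factorization $x^f - 1 = (x-1)\theta_f(x)$ and differentiate it, then evaluate at $\zeta$.

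First I would start from $(x-1)\theta_f(x) = x^f - 1$, which follows from the geometric series identity for $\theta_f(x) = \sum_{i=0}^{f-1} x^i$. Differentiating both sides with respect to $x$ yields
\[
\theta_f(x) + (x-1)\theta'_f(x) = f x^{f-1}.
\]
Next I would plug in $x = \zeta$. Since $\zeta^f = 1$ and $\zeta \ne 1$, we have $\theta_f(\zeta) = (\zeta^f - 1)/(\zeta - 1) = 0$, so the equation collapses to
\[
(\zeta - 1)\theta'_f(\zeta) = f \zeta^{f-1} = \frac{f}{\zeta},
\]
where the last equality uses $\zeta^f = 1$, i.e. $\zeta^{f-1} = \zeta^{-1}$.

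Finally I would solve for $\theta'_f(\zeta)$ and rearrange. Dividing by $\zeta - 1$ gives $\theta'_f(\zeta) = \frac{f}{\zeta(\zeta-1)}$, and therefore
\[
-\frac{1}{f}\zeta\, \theta'_f(\zeta) = -\frac{1}{\zeta - 1} = \frac{1}{1-\zeta},
\]
as claimed. There is no real obstacle here; the only thing to watch out for is keeping track that $\zeta \ne 1$ so that division by $\zeta - 1$ is legitimate and $\theta_f(\zeta) = 0$ actually holds. The proof is a one-line computation once the derivative of the factorization is written down.
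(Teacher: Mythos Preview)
Your proof is correct and follows essentially the same approach as the paper: differentiate the factorization $x^f-1=(x-1)\theta_f(x)$, evaluate at $\zeta$, use $\theta_f(\zeta)=0$ for $\zeta\ne1$, and rearrange using $\zeta^{f-1}=\zeta^{-1}$. The paper's version is just slightly terser.
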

\begin{proof}
Observe that \( x^f - 1 = (x-1)\theta_f(x) \). Taking the derivative of both sides with respect to \( x \) yields \( f x^{f-1} = \theta_f(x) + (x-1) \theta'_f(x) \). The lemma follows from \( \theta_f(\zeta) = 0 \) for any \( \zeta \neq 1 \).
\end{proof}

For a fixed $f$ and each $\zeta \in \P_f$, $\zeta^{a_i}=1$ if and only if $f \mid a_i$. Denote by $n(f \mid \a) := \# \{i : f \mid a_i\}$. Then \eqref{e-Wave-1} can be written as
\begin{equation}\label{e-Wave-2}
\begin{aligned}
W_f (t; \a) &= -\res_{s=0}\sum_{\zeta \in \P_f} \frac{\zeta^{-t} \e^{-t s}}{\prod_{i=1}^{N} (1-\zeta^{a_i} \e^{a_is})} \\
&= -\res_{s=0} \sum_{\zeta \in \P_f} \frac{\zeta^{-t} }{\prod_{i=1}^{N} (1-\zeta^{a_i} \e^{a_is})} \sum_{m \ge 0} \frac{(-t s)^m}{m!} \\
&=  \sum_{m \ge 0} \frac{(-1)^{m+1}}{m!} t^m \res_{s=0} s^m \sum_{\zeta \in \P_f} \frac{\zeta^{-t} }{\prod_{i=1}^{N} (1-\zeta^{a_i} \e^{a_is})} \\
&= \sum_{m = 0}^{n(f \mid \a)-1} \frac{(-1)^{n(f \mid \a) + m+1}}{m! \prod_{i: f \mid a_i} a_i} t^m [s^{n(f \mid \a)-1-m}] \sum_{\zeta \in \P_f} \W_f(t; \a, \zeta),
\end{aligned}
\end{equation}
where
\[
\W_f(t; \a, \zeta) = \zeta^{-t} \prod_{i: f \nmid a_i} v_i(\zeta) \prod_{i : f \mid a_i} g(a_i s) \prod_{i: f \nmid a_i} g(a_i s, v_i(\zeta)-1),
\]
$v_i(x) = \Rsta{ - \frac{1}{f} x^{a_i} \theta'_f(x^{a_i})}$ is obtained by Lemma \ref{lem-inver} and the fact that $\Phi_f(\zeta) = 0$ for any $\zeta \in \P_f$,
and $[s^{n(f \mid \a)-1-m}] \W_f(t; \a, \zeta)$ must be equal to $0$ when $m \ge n(f \mid \a)$ since $\W_f(t; \a, \zeta)$ is a power series in $s$.

Note that we need to compute such $v_i(x)$ at most $f-1$ times. For any positive integer $a$ satisfying $f \nmid a$, we have $\zeta^{a} = \zeta^{a \pmod f}$.
Therefore, we only need to compute $\Rsta{- \frac{1}{f} x^{a} \theta'_f(x^{a})} $ for some $1 \le a \le f-1$.

The problem simplifies to the computation of \( [s^{n(f \mid \a)-1-m}] \sum_{\zeta \in \P_f} \W_f(t; \a, \zeta) \) for all \( 0 \le m \le n(f \mid \a) - 1 \).
The most straightforward scenario occurs when \( f = 1 \). In this case, \( n(1 \mid \a) = N \), \( \P_1 = \{1\} \), and
\[
\W_1(t; \a, 1) = \prod_{i=1}^{N} g(a_i s).
\]
Applying Theorem \ref{theo-GTodd} to compute
\begin{equation}\label{e-Wavefeq1-GTodd}
\W_1(t; \a, 1) \pmod{\langle s^{N} \rangle} = \sum_{k=0}^{N-1} A_k s^k, \quad \text{where} \quad A_k \in \QQ.
\end{equation}
Substituting the above into \eqref{e-Wave-2} yields
\begin{equation}\label{e-Wavefeq1}
W_1(t; \a) = \Big[ \sum_{m = 0}^{N-1} \frac{(-1)^{N + m+1}}{m! \prod_{i=1}^N a_i} A_{N-1-m} t^m \Big].
\end{equation}

In the case when \( f > 1 \), we apply Theorem \ref{theo-GTodd} to determine
\begin{equation}\label{e-applyGTodd}
\prod_{i : f \mid a_i} g(a_i s) \prod_{i: f \nmid a_i} g(a_i s, v_i(\zeta)-1) \pmod{\langle s^{n(f \mid \a)} \rangle} = \sum_{k = 0}^{n(f \mid \a)-1} \hM_k(\zeta) s^k,
\end{equation}
where \( \hM_k(x) \) is a polynomial in \( x \) for each \( k \). Consequently, we obtain
\begin{equation}\label{e-s-1-m}
[s^{n(f \mid \a)-1-m}] \W_f(t; \a, \zeta) = \zeta^{-t} M_m(\zeta),
\end{equation}
where
\begin{equation}\label{e-Mm}
M_m(x) = \BRsta{\hM_{n(f \mid \a)-1-m}(x) \prod_{i: f \nmid a_i} v_i(x)}.
\end{equation}
\begin{theo}\label{theo-main}
Under the aforementioned notations, for each \( 0 \le m \le n(f \mid \a)-1 \), assume
\begin{equation}\label{e-finalstep}
M_m(x) \hPhi_f(x) \Phi'_f(x) \equiv \sum_{i=0}^{f-1} c_{m,i} x^i \pmod{\langle x^f-1 \rangle}.
\end{equation}
Then
\begin{equation}\label{e-Wave-m}
[s^{n(f \mid \a)-1-m}] \sum_{\zeta \in \P_f} \W_f(t; \a, \zeta) = c_{m, \hat t-1}, \quad \text{ where } \quad \hat{t} \equiv t \pmod f.
\end{equation}
Furthermore, by substituting \eqref{e-Wave-m} into \eqref{e-Wave-2}, we find that \( W_f(t; \a) = [P_1(t), \dots, P_f(t)] \), where
\begin{equation}\label{e-Wavef-P}
P_i(t) = \sum_{m = 0}^{n(f \mid \a)-1} \frac{(-1)^{n(f \mid \a) + m+1}}{m! \prod_{i: f \mid a_i} a_i} c_{m, i-1} t^m.
\end{equation}
\end{theo}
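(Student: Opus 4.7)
The plan is to reduce the sum $\sum_{\zeta \in \P_f} \zeta^{-t} M_m(\zeta)$, which is over primitive $f$-th roots of unity, to a sum over all $f$-th roots of unity via Lemma \ref{lem-sumpunits}, and then to exploit the orthogonality relation $\sum_{\zeta: \zeta^f=1} \zeta^k = f$ if $f \mid k$ and $0$ otherwise to isolate a single coefficient. Starting from \eqref{e-s-1-m}, which says $[s^{n(f \mid \a)-1-m}] \W_f(t; \a, \zeta) = \zeta^{-t} M_m(\zeta)$ for $\zeta \in \P_f$, I would apply Lemma \ref{lem-sumpunits} to the rational function $F(x) = x^{-t} M_m(x)$; this is legitimate because every $\zeta$ with $\zeta^f = 1$ is nonzero, so $F(\zeta)$ exists. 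The lemma yields
\[
\sum_{\zeta \in \P_f} \zeta^{-t} M_m(\zeta) = \frac{1}{f} \sum_{\zeta: \zeta^f=1} \zeta^{1-t} M_m(\zeta) \hPhi_f(\zeta) \Phi'_f(\zeta).
\]

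Next, since any $\zeta$ with $\zeta^f=1$ satisfies $\zeta^f - 1 = 0$, the congruence hypothesis \eqref{e-finalstep} evaluated at $\zeta$ gives $M_m(\zeta) \hPhi_f(\zeta) \Phi'_f(\zeta) = \sum_{i=0}^{f-1} c_{m,i} \zeta^i$. Substituting and swapping summations produces $\tfrac{1}{f}\sum_{i=0}^{f-1} c_{m,i} \sum_{\zeta: \zeta^f=1} \zeta^{i+1-t}$. Orthogonality kills every term except the one for which $i+1 \equiv t \pmod f$; under the paper's convention $1 \le \hat{t} \le f$, the unique such $i \in \{0,1,\dots,f-1\}$ is $i = \hat{t}-1$. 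The inner sum at that index contributes $f$, cancelling the prefactor and leaving exactly $c_{m,\hat{t}-1}$, which is \eqref{e-Wave-m}.

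For the final assertion, I would substitute \eqref{e-Wave-m} into \eqref{e-Wave-2} and split according to the residue class $t \equiv i \pmod f$ with $1 \le i \le f$: on the class of $i$, we have $\hat{t} = i$, so the coefficient $c_{m,\hat{t}-1}$ becomes $c_{m,i-1}$, and the resulting polynomial in $t$ is precisely $P_i(t)$ in \eqref{e-Wavef-P}, giving $W_f(t;\a) = [P_1(t),\dots,P_f(t)]$. The only delicate point I anticipate is the bookkeeping between the $0$-indexed coefficients $c_{m,i}$ and the $1$-indexed residue labels in $[P_1,\dots,P_f]$, which is ultimately a matter of carefully tracking the convention $\hat{t} \in \{1,\dots,f\}$; aside from this, the argument is a direct symbolic manipulation built on Lemma \ref{lem-sumpunits} and root-of-unity orthogonality.
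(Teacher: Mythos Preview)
Your proposal is correct and follows essentially the same route as the paper's own proof: invoke \eqref{e-s-1-m}, apply Lemma~\ref{lem-sumpunits} with $F(x)=x^{-t}M_m(x)$, substitute the congruence \eqref{e-finalstep}, and use orthogonality of the $f$-th roots of unity to isolate $c_{m,\hat t-1}$. The only cosmetic difference is that the paper replaces $\zeta^{-t}$ by $\zeta^{-\hat t}$ before applying orthogonality, whereas you do this implicitly when identifying the surviving index; the indexing bookkeeping you flag is handled exactly as you describe.
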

\begin{proof}
A straightforward computation yields
\begin{align*}
[s^{n(f \mid \a)-1-m}] \sum_{\zeta \in \P_f} \W_f(t; \a, \zeta) &= \sum_{\zeta \in \P_f} \zeta^{-t} M_m(\zeta) \quad \text{(by \eqref{e-s-1-m})} \\
&= \frac{1}{f} \sum_{\zeta: \zeta^f=1} \zeta \zeta^{-t} \sum_{i=0}^{f-1} c_{m,i} \zeta^i \quad \text{(by Lemma \ref{lem-sumpunits})}\\
&= \frac{1}{f} \sum_{i=0}^{f-1} c_{m,i} \sum_{\zeta: \zeta^f=1} \zeta^{i+1-\hat{t}}
= \frac{1}{f} c_{m,\hat{t}-1} \sum_{\zeta: \zeta^f=1} \zeta^0
= c_{m,\hat{t}-1},
\end{align*}
where the penultimate equation holds because \( \sum_{\zeta: \zeta^f=1} \zeta^j = 0 \) for any \( j \pmod{f} \neq 0\).
\end{proof}

\begin{exam}
Returning to Example \ref{ex-136}, recall $\a = (1,3,6)$. 

We provide the details of the computation of $W_3(t; \a)$ to explain our computation scheme. It is obvious that $n(3 \mid \a) = 2$. By lemma \ref{lem-inver},
\[
\frac{1}{1-\zeta} = \BRsta{-\frac{1}{3} x (2 x + 1)} \Big|_{x=\zeta} = \frac{\zeta+2}{3}
\]
holds for any $\zeta \in \P_3$. According to \eqref{e-Wave-2},
\[
W_3 (t; \a) = \sum_{m = 0}^{1} \frac{(-1)^{m+1}}{18\ m!} t^m [s^{1-m}] \sum_{\zeta \in \P_3} \W_3(t; \a, \zeta),
\]
where
\[
\W_3(t; \a, \zeta) = \zeta^{-t}\ \frac{\zeta+2}{3}\ g(3 s)\ g(6 s)\  g\Big(s, \frac{\zeta - 1}{3}\Big).
\]
We apply Theorem \ref{theo-GTodd} to determine
\[
g(3 s)\ g(6 s)\  g\Big(s, \frac{\zeta - 1}{3}\Big) \pmod{\langle s^2 \rangle} = 1 + \frac{2 \zeta - 29}{6} s.
\]
By \eqref{e-Mm},
\begin{align*}
M_0(x) &= \frac{x + 2}{3}\ \frac{2\ x - 29}{6} \pmod{\langle \Phi_3(x) \rangle} = -\frac{3\ x}{2} - \frac{10}{3}, \\
M_1(x) &= \frac{x + 2}{3}\cdot 1 \pmod{\langle \Phi_3(x) \rangle} = \frac{x + 2}{3}.
\end{align*}
Furthermore, we have $\hPhi_3(x) = \frac{x^3-1}{\Phi_3(x)} = x-1$ and $\Phi'_3(x) = 2 x+1$. Then we can compute
\begin{align*}
M_0(x) \hPhi_3(x) \Phi'_3(x) &\equiv - \frac{31\ x^2}{6} + \frac{29\ x}{6} + \frac{1}{3} \pmod{\langle x^3-1 \rangle}, \\
M_1(x) \hPhi_3(x) \Phi'_3(x) &\equiv x^2 - x \pmod{\langle x^3-1 \rangle}.
\end{align*}
Finally, by Theorem \ref{theo-main}, $W_3(t; \a) = [P_1(t), P_2(t), P_f(t)]$, where
\begin{align*}
  P_1(t) &= \frac{-1}{0! \times 18} \cdot \frac{1}{3} + \frac{1}{1! \times 18} t \cdot 0 = - \frac{1}{54}, \\
  P_2(t) &= \frac{-1}{0! \times 18} \cdot \frac{29}{6}  + \frac{1}{1! \times 18} t \cdot (- 1) = -\frac{t}{18} - \frac{29}{108}, \\
  P_3(t) &= \frac{-1}{0! \times 18} \cdot \Big(- \frac{31}{6} \Big) + \frac{1}{1! \times 18} t \cdot 1 = \frac{t}{18} + \frac{31}{108}.
\end{align*}
The result is in perfect agreement with that presented in Example \ref{ex-136}.
\end{exam}

\subsection{Algorithm and complexity analysis}
We present Algorithm \texttt{Cyc-Denum} to encapsulate the computation of \( W_f(t; \a) \). This algorithm has been implemented in the namesake \texttt{Maple} package and its complexity is asserted in Theorem \ref{theo-ca}.

\begin{algorithm}[H]
\DontPrintSemicolon
\KwInput{A positive integer sequence $\a$ with $\gcd(\a)=1$. \\
        \hspace{3.2em} A (symbolic) nonnegative integer $t$.}
\KwOutput{The list of $W_f(t; \a)$ as described in \eqref{e-Denum}.}

Let $S$ be the set of all divisors of the entries of $\a$.

\For{$f \in S$}{

    \If{$f=1$}{

        Compute \eqref{e-Wavefeq1-GTodd} and obtain $W_1(t; \a)$ by \eqref{e-Wavefeq1}.

    }

    \Else{

        Compute $v_i(x) = \Rsta{- \frac{1}{f} x^{a_i} \theta'_f(x^{a_i})}$ for all $a_i$ satisfying $f \nmid a_i$, and derive \eqref{e-applyGTodd} using Theorem \ref{theo-GTodd}.

        For each $0 \le m \le n(f \mid \a)-1$, compute $M_m(x)$ as in \eqref{e-Mm} and $\sum_{i=0}^{f-1} c_{m,i} x^i$ as in \eqref{e-finalstep}.

        Obtain $W_f(t; \a) = [P_1(t), \dots, P_f(t)]$, where $P_i(t)$'s are as in \eqref{e-Wavef-P}.

    }
}

\Return The list of $W_f(t; \a)$ for all $f \in S$.

\caption{Algorithm \texttt{Cyc-Denum}}\label{alg-Wave}
\end{algorithm}

\begin{theo}\label{theo-ca}
Let $\a = (a_1,\dots,a_{N})$ be a sequence of positive integers with $\gcd(\a)=1$ and $f$ be a factor of certain $a_i$.

\begin{enumerate}
  \item Algorithm \texttt{Cyc-Denum} correctly computes $W_1(t; \a)$ using $O(N \log^2(N))$ \mmm{\QQ}.

  \item For $f>1$, Algorithm \texttt{Cyc-Denum} correctly computes $W_f(t; \a)$ using $O(d)$ \mmm{\QQ[x]/\langle x^f - 1 \rangle} and $O\big(f d\log(d) + N \log^2(d)\big)$ \mmm{R}, where $d=n(f \mid \a)$.
\end{enumerate}

\end{theo}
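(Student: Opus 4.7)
The plan is to handle the two algorithm branches separately, and in each branch to isolate (i) the single call to Theorem \ref{theo-GTodd}, which I expect to dominate the cost, from (ii) cheap post-processing that I intend to absorb into the stated bound.

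For part (1), when $f = 1$, equation \eqref{e-Wavefeq1-GTodd} is a direct instance of Theorem \ref{theo-GTodd} with $R = \QQ$, $a = 0$, $r = 0$, $B_0 = \{a_1, \ldots, a_N\}$, and $d = N$. That theorem immediately gives $O\bigl(N \log(N) + N \log^2(N)\bigr) = O(N \log^2(N))$ operations in $\QQ$. The subsequent formula \eqref{e-Wavefeq1} merely packages the first $N$ coefficients into a polynomial in $t$, costing only $O(N)$ additional operations, which is absorbed into the bound.

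For part (2), I would first precompute $v_i(x) = \Rsta{-\frac{1}{f} x^{a_i} \theta'_f(x^{a_i})}$ for each residue class $a_i \bmod f$ with $f \nmid a_i$; since $v_i$ depends only on this residue, there are at most $f-1$ distinct polynomials to compute, fitting within the budget. Next, and this is the key step, I would apply Theorem \ref{theo-GTodd} with $R = \QQ[x]/\langle \Phi_f(x) \rangle$ to the product in \eqref{e-applyGTodd}, grouping the factors $g(a_i s, v_i(\zeta) - 1)$ by the value of $v_i(\zeta) - 1 \in R$. Because $v_i(\zeta)$ depends only on $a_i \bmod f$, the parameter $r$ in Theorem \ref{theo-GTodd} satisfies $r \le f - 1$, while $|B_0| = d$ and $\sum_{i=0}^{r} |B_i| = N$. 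Substituting these into the bound of Theorem \ref{theo-GTodd} gives the dominant cost $O\bigl(f d \log(d) + N \log^2(d)\bigr)$ operations in $R$. For post-processing: precomputing $\prod_{i:\, f \nmid a_i} v_i(x) \bmod \Phi_f(x)$ uses $O(N)$ multiplications in $R$; after that, each $M_m(x)$ from \eqref{e-Mm} costs one more multiplication in $R$, for an additional $O(d)$ operations in $R$. Each coefficient vector $(c_{m,0}, \ldots, c_{m, f-1})$ from \eqref{e-finalstep} is then obtained by multiplying the lifted $M_m(x)$ against the one-time precomputed polynomial $\hPhi_f(x) \Phi'_f(x)$ inside $\QQ[x]/\langle x^f - 1 \rangle$; this accounts for the separate $O(d)$ operations in that ring. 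Assembling the $P_i(t)$ via \eqref{e-Wavef-P} is a bounded rearrangement, and correctness is inherited from \eqref{e-Wave-2}, \eqref{e-Mm}, and Theorem \ref{theo-main}.

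The main technical obstacle lies in the grouping argument: one must verify that collecting the factors $g(a_i s, \cdot)$ by residue class of $a_i$ modulo $f$ genuinely satisfies the hypotheses of Theorem \ref{theo-GTodd} with the $y_i$'s drawn from $R$, which is what keeps the leading factor in the second cost term linear in $f$ rather than scaling with a larger quantity such as $N$. A secondary bookkeeping subtlety is separating the operation counts between $R$ and $\QQ[x]/\langle x^f - 1 \rangle$, which is forced by the final reduction modulo $x^f - 1$ in \eqref{e-finalstep}, since the representative must be lifted out of $R$ before multiplication by $\hPhi_f(x) \Phi'_f(x)$.
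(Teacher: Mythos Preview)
Your proposal is correct and follows essentially the same approach as the paper: apply Theorem~\ref{theo-GTodd} once for \eqref{e-Wavefeq1-GTodd} when $f=1$ and once for \eqref{e-applyGTodd} when $f>1$, then account separately for the cheap pre- and post-processing steps (computing the $v_i(x)$, the product $\prod_{i:f\nmid a_i} v_i(x)$, the $M_m(x)$, the reductions modulo $x^f-1$, and the assembly of the $P_i(t)$). In fact you are more explicit than the paper about \emph{why} the call to Theorem~\ref{theo-GTodd} in Step~3 costs $O\big(f d\log(d)+N\log^2(d)\big)$: you spell out the grouping of the factors $g(a_is,\cdot)$ by the residue $a_i\bmod f$, which forces $r\le f-1$ and $\sum_i|B_i|=N$, whereas the paper simply asserts the bound.
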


\begin{proof}
In Step 0, we compute $\prod_{i: f \mid a_i} a_i$ and $m!$ for $0 \le m \le n(f \mid \a)-1$ using $O(d)$ \mmm{\QQ} (or $O(N)$ \mmm{\QQ} if $f=1$).

For part (1), we proceed to compute \eqref{e-Wavefeq1-GTodd} by Theorem \ref{theo-GTodd} using $O( N\log^2(N) )$ \mmm{\QQ} and obtain $W_1(t; \a)$ by \eqref{e-Wavefeq1} using $O(N)$ \mmm{\QQ}. Then the total complexity is clearly $O(N \log^2(N))$ \mmm{\QQ}.

For part (2), we conclude the computation with the following steps.

In Step 1, we compute $v_i(x) = \Rsta{- \frac{1}{f} x^{a_i} \theta'_f(x^{a_i})}$ for all $i \in \{i : f \nmid a_i\}$ using $O(f)$ \mmm{R}.

In Step 2, we compute $\Rsta{\prod_{i: f \nmid a_i} v_i(x)}$ using $O(N -d)$ \mmm{R}.

In Step 3, we utilize Theorem \ref{theo-GTodd} to compute \eqref{e-applyGTodd}. This step uses at most
$O\big(f d\log(d) + N \log^2(d)\big)$ \mmm{R}.

In Step 4, we compute $M_m(x)$ as in \eqref{e-Mm} for all $0 \le m \le d-1$ using $O(d)$ \mmm{R}.

In Step 5, we compute $\hPhi_f(x) \Phi'_f(x)$ and \eqref{e-finalstep} for all $0 \le m \le d-1$ using $O(d)$ \mmm{\QQ[x]/\langle x^f - 1 \rangle}.

In Step 6, we derive $P_i(t)$ as in \eqref{e-Wavef-P} for all $1 \le i \le f$ using $O(f)$ \mmm{\QQ}.

Then the total complexity is the sum of the complexities of all the above steps, yielding the asserted bound.
\end{proof}

\subsection{Experiments}\label{ss-expri}

In this subsection, we report the results of our computational experiments, which are all achieved on the same personal laptop.

One experiment is computing $d(t; 1, 2, \dots, k)$ by algorithms \texttt{PARTITIONS}, \texttt{Cyc-Denum} and \texttt{Float-Denum} (as described in Section \ref{s-float}), respectively. We only retain the data whose running time is less than $1$ minute and plot the line plots (See Figure \ref{fig-PartCyfl}).
\texttt{PARTITIONS} (the green line) can compute $d(t; 1, 2, \dots, k)$ in $1$ minute for any $k \le 58$. \texttt{Cyc-Denum} (the blue line) can do so for $ k \le 117$, and \texttt{Float-Denum} (the yellow line) for $k \le 146$.
The running time ratios of these algorithms are visualized in Figure \ref{fig-ratio}.
We must say that the ratios for small $k$ should be considered invaluable because the computation time of all the above three algorithms is close to $0$ seconds.

\begin{figure}[htp!]
  \centering
  \includegraphics[width=16cm]{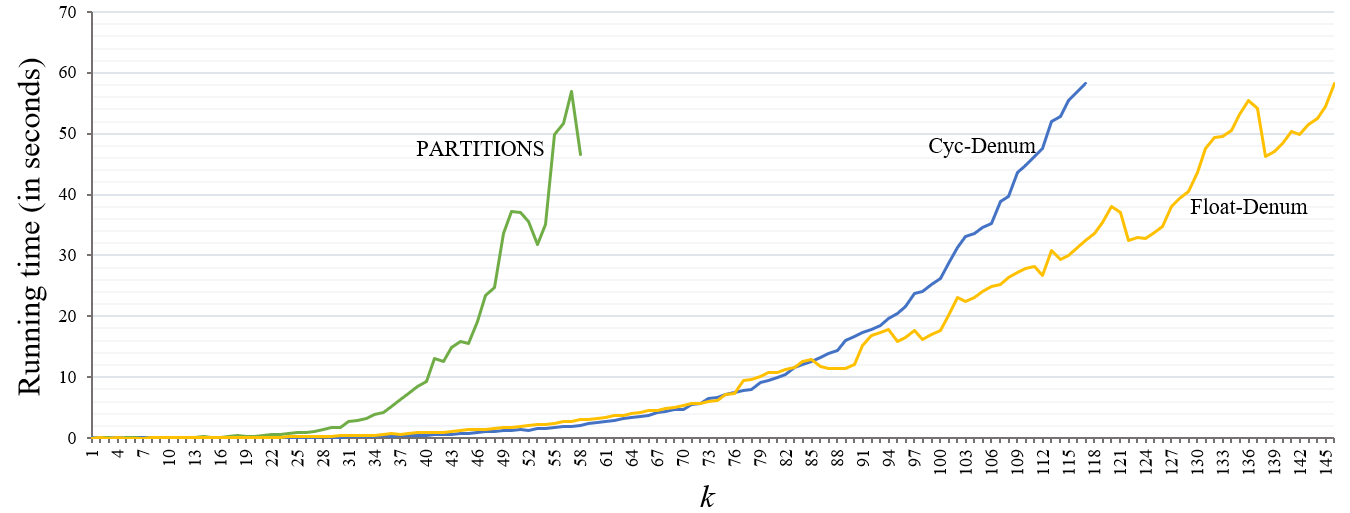}
  \caption{Running time (in seconds) for computing $d(t; 1, 2, \dots, k)$. }\label{fig-PartCyfl}
\end{figure}

\begin{figure}[htp!]
\centering
\subfigure[$\texttt{PARTITIONS}/ \texttt{Cyc-Denum}$]{
\label{fig-ratio1}
\includegraphics[width=7.5cm, height = 4cm]{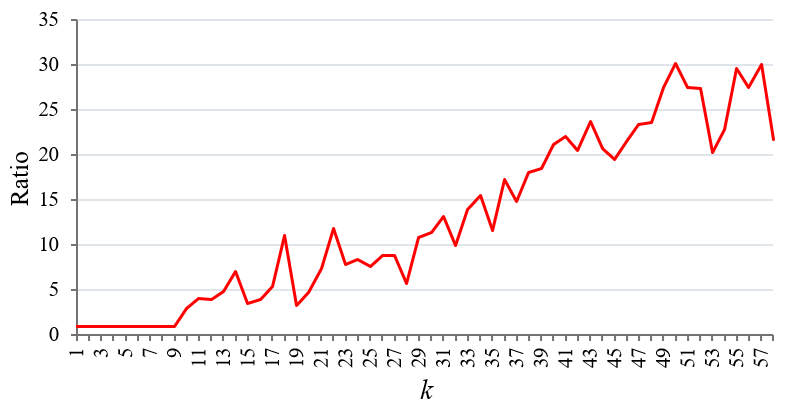}} \quad
\subfigure[$\texttt{Cyc-Denum} / \texttt{Float-Denum}$]{
\label{fig-ratio2}
\includegraphics[width=7.5cm, height = 4cm]{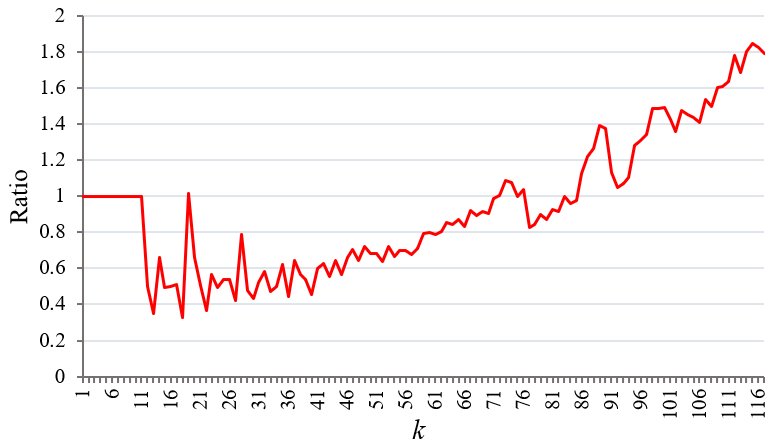}}
\caption{The ratio of running time for computing $d(t; 1, 2, \dots, k)$ between different algorithms.}
\label{fig-ratio}
\end{figure}

Another experiment is computing $d(t; \a)$ for random sequences $\a$ with $1 \le a_i \le 500 \ (1 \le i \le N)$. We increase $N$ sequentially from $N =2$ and compute $100$ random sequences for each $N$.
Because of the high memory of partial fraction decomposition, \texttt{PARTITIONS} does not perform well in this experiment. For instance, the computation of $\a = (428, 393)$ is out of reach for \texttt{PARTITIONS}, while \texttt{Cyc-Denum} completed the computation in \( 0.156 \) seconds.
Therefore, we focus on the experimental data of \texttt{Cyc-Denum}.
It successfully computes the denumerant of each $\a$ in $1$ minute for $N \le 55$ and in $2$ minutes for $N \le 63$. The number of successful computations in $1$ and $1.5$ minutes for $56 \le N \le 63$ are shown in Table \ref{table-random}.

\begin{center}
\begin{table}[htp]
\caption{The number of successful computations in $1$ and $1.5$ minutes for $56 \le N \le 63$.}\label{table-random}
  \begin{tabular}{c|ccccccccc}
    \toprule
    $N$ &  $56$ & $57$ & $58$ & $59$ & $60$ & $61$ & $62$ & $63$ &  \\
    \midrule
    \makecell{The number of successful \\ computations in $1$ minute} & $97$ & $97 $ & $ 85$ & $77 $ & $58$ & $45$ & $26$ & $19$  \\
    \midrule
    \makecell{The number of successful \\ computations in $1.5$ minutes} & $100 $ & $100 $ & $100 $ & $100 $ & $100 $ & $97 $ & $95 $ & $ 89$  \\
    \bottomrule
  \end{tabular}
\end{table}
\end{center}

\section{Floating computation}\label{s-float}
Another idea for computing $d(t; \a)$ is converting each root of unity $\zeta$ to a floating number.
Then all computations are over complex field $\CC$.
Although this method does not yield precise results, the low memory requirements enable the prediction of $d(t; \a)$ when $N$ is significantly large.

For a fixed $f$, let $\eta_f \in \CC$ be the floating value of $\e^{\frac{2 \pi \sqrt{-1}}{f}}$. Rewrite
\begin{equation}\label{e-Wave-3}
W_f (t; \a) = \sum_{m = 0}^{n(f \mid \a)-1} \frac{(-1)^{n(f \mid \a) + m+1}}{m! \prod_{i: f \mid a_i} a_i} t^m [s^{n(f \mid \a)-1-m}] \sum_{j \in J_f} \V_f(t; \a, j),
\end{equation}
where $J_f := \{j : 1\le j \le f \text{ and } \gcd(j,f)=1\}$, and
\[
\V_f(t; \a, j) = \frac{\eta_f^{- j t}}{\prod_{i: f \nmid a_i} (1 - \eta_f^{j a_i})} \prod_{i : f \mid a_i} g(a_i s) \prod_{i: f \nmid a_i} g\Big(a_i s, \frac{\eta_f^{j a_i}}{1 - \eta_f^{j a_i}}\Big).
\]

In fact, Theorem \ref{theo-GTodd} is applicable to the case when $\QQ \subseteq R$ (hence to $R= \CC$)  (See \cite{2023-Xin-GTodd}).
Then for each $j \in J_f$, we can use Theorem \ref{theo-GTodd} to figure out
\begin{equation}\label{e-applyGTodd-fl}
\prod_{i : f \mid a_i} g(a_i s) \prod_{i: f \nmid a_i} g\Big(a_i s, \frac{\eta_f^{j a_i}}{1 - \eta_f^{j a_i}}\Big) \pmod{\langle s^{n(f \mid \a)} \rangle} = \sum_{k = 0}^{n(f \mid \a)-1} A_k(f, j) s^k,
\end{equation}
where $A_k(f, j) \in \CC$ is a constant complex number with respect to $f$ and $j$ for each $k$. And
\begin{equation}\label{e-mcoe-fl}
[s^{n(f \mid \a)-1-m}] \V_f(t; \a, j) = \frac{ A_{n(f \mid \a)-1-m}(f, j)}{\prod_{i: f \nmid a_i} (1 - \eta^{j a_i})} \Big[ \eta^{-j}, \eta^{-2 j}, \dots, \eta^{-j f} \Big]
\end{equation}
is a quasi-polynomial in $t$. Substituting the above into \eqref{e-Wave-3} gives $W_f(t; \a) = [P_1(t), \dots, P_f(t)]$, where
\begin{equation}\label{e-Wavef-P-fl}
P_i(t) = \frac{1}{\prod_{i: f \mid a_i} a_i} \sum_{m = 0}^{n(f \mid \a)-1} \frac{(-1)^{n(f \mid \a) + m+1}}{m!} t^m \sum_{j \in J_f} \frac{ \eta^{-i j} A_{n(f \mid \a)-1-m} (f, j)(\zeta)}{\prod_{i: f \nmid a_i} (1 - \eta^{j a_i})}.
\end{equation}

The following algorithm clarifies the procedure for computing $d(t; \a)$ by floating computation scheme. Additionally, Theorem \ref{theo-flca} provides the complexity analysis of the computation of $W_f(t; \a)$ under this scheme.

\begin{algorithm}[H]
\DontPrintSemicolon
\KwInput{A positive integer sequence $\a$ with $\gcd(\a)=1$. \\
        \hspace{3.2em} A (symbolic) nonnegative integer $t$.}
\KwOutput{The list of $W_f(t; \a)$ as described in \eqref{e-Denum}.}

Let $S$ be the set of all divisors of the entries of $\a$.

\For{$f \in S$}{

    \For{$j \in J_f$}{

        Compute $\frac{1}{1 - \eta^{j a_i}} $ (hence $\frac{\eta^{j a_i}}{1 - \eta^{j a_i}}$) for all $a_i$ satisfying $f \nmid a_i$, and obtain \eqref{e-applyGTodd-fl} by Theorem \ref{theo-GTodd}.

        Compute $\eta^{j i}$ for $1\le i \le f$ and obtain $[s^{n(f \mid \a)-1-m}] \V_f(t; \a, j)$ by \eqref{e-mcoe-fl} for each $0 \le m \le n(f \mid \a)-1$.

    }

    Obtain $W_f(t; \a) = [P_1(t), \dots, P_f(t)]$, where $P_i(t)$'s are as in \eqref{e-Wavef-P-fl}.

}

\Return The list of $W_f(t; \a)$ for all $f \in S$.

\caption{Algorithm \texttt{Folat-Denum}}\label{alg-float}
\end{algorithm}

\begin{theo}\label{theo-flca}
Let $\a = (a_1, \dots, a_{N})$ be a positive integer sequence satisfying $\gcd(\a)=1$ and $f$ be a factor of certain $a_i$.
Algorithm \texttt{Folat-Denum} correctly computes $W_f(t; \a)$ using $O\big( (f d\log(d) +  N \log^2(d)) \varphi(f) \big)$ \mmm{\CC} by, where $d = n(f\mid \a)$.
\end{theo}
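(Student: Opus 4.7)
The plan is to mirror the argument for Theorem \ref{theo-ca}, this time tracking operations in $\CC$ rather than in the cyclotomic ring. Correctness is inherited from the derivation \eqref{e-Wave-3}--\eqref{e-Wavef-P-fl}, which is nothing but a reindexing of \eqref{e-Wave-2} through the bijection $j\leftrightarrow \zeta=\eta_f^{j}$ between $J_f$ and $\P_f$; combined with the fact (noted just after \eqref{e-gtd} and established in \cite{2023-Xin-GTodd}) that the log--exponential framework underlying Theorem \ref{theo-GTodd} works over any commutative ring containing $\QQ$, in particular over $R=\CC$.

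For the complexity, I would fix one $j\in J_f$ and bound the cost of one iteration of the outer loop. First precompute the table $\eta_f^{ji}$ for $0\le i<f$ by repeated multiplication, at cost $O(f)$ operations in $\CC$. Since $\eta_f^{ja_i}$ depends on $a_i$ only through $a_i\bmod f$, this table yields $\eta_f^{ja_i}$, and hence $\frac{1}{1-\eta_f^{ja_i}}$, for every $i$ with $f\nmid a_i$ in a further $O(N)$ operations.

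The dominant step is forming \eqref{e-applyGTodd-fl} via Theorem \ref{theo-GTodd}. The key observation---and the only point requiring care---is that the factors $g\bigl(a_is,\,\tfrac{\eta_f^{ja_i}}{1-\eta_f^{ja_i}}\bigr)$ share the same second argument $y_c = \eta_f^{jc}/(1-\eta_f^{jc})$ for all $i$ with $a_i\equiv c\pmod f$. Grouping indices by residue class modulo $f$ therefore expresses \eqref{e-applyGTodd-fl} in the form \eqref{e-gtd} with $r\le f-1$ distinct parameters and multi-sets $B_0=\{a_i: f\mid a_i\}$, $B_c=\{a_i:a_i\equiv c\pmod f\}$ for $1\le c\le f-1$, satisfying $|B_0|=d$ and $\sum_{c=0}^{f-1}|B_c|=N$. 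Invoking Theorem \ref{theo-GTodd} then bounds the cost by $O\bigl((r+1)d\log(d)+N\log^2(d)\bigr)=O\bigl(fd\log(d)+N\log^2(d)\bigr)$ operations in $\CC$. Extracting the coefficients and assembling \eqref{e-mcoe-fl} for all $m$ adds only $O(fd)$ further operations per $j$.

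Summing over the $|J_f|=\varphi(f)$ values of $j$ yields the claimed bound $O\bigl((fd\log(d)+N\log^2(d))\varphi(f)\bigr)$, and the final combination in \eqref{e-Wavef-P-fl} contributes only $O(fd\varphi(f))$ operations, which is absorbed into the same bound. The only real obstacle is the residue-class grouping: without it one would apply Theorem \ref{theo-GTodd} with $r=N-d$, replacing the term $fd\log(d)$ by $(N-d+1)d\log(d)$ per $j$ and losing the dependence on $\varphi(f)$ that makes the floating scheme competitive. Once that grouping is in hand, everything else is straightforward bookkeeping.
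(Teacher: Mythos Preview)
Your argument is correct and follows essentially the same route as the paper's own proof: a step-by-step accounting of the inner loop for fixed $j\in J_f$, with the dominant contribution coming from the invocation of Theorem~\ref{theo-GTodd}, then summation over $\varphi(f)$ values of $j$. The one point on which you are more explicit than the paper is the residue-class grouping of the factors $g(a_is,y_c)$ by $c=a_i\bmod f$, which is precisely what forces $r\le f-1$ in Theorem~\ref{theo-GTodd} and hence yields the $fd\log(d)$ term; the paper's Step~3 asserts this bound without spelling out the grouping, so your version arguably fills a small gap in the exposition.
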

\begin{proof}
We complete the computation by the following steps.

In Step 0, we compute $\prod_{i : f \mid a_i} a_i$ and $m!$ for all $0 \le m \le n(f \mid \a)-1$ using $O(d)$ \mmm{\CC}.

In Step 1, we compute $\eta_f^i$ and $\frac{1}{1-\eta_f^i}$ (hence $\frac{\eta_f^i}{1-\eta_f^i}$) for $1 \le i \le f-1$ using $O(f)$ \mmm{\CC}. Then we can read the values of either $\eta_f^{a}$ or $\frac{1}{1-\eta_f^a}$ for any positive integer $a$ satisfying $f \nmid a$ since $\eta_f^{a}=\eta_f^{a \pmod{f}}$.

In Step 2, we compute $\frac{1}{\prod_{i: f \nmid a_i} (1 - \eta^{j a_i})}$ for all $j \in J_f$ using $O(\varphi(f) (N - d))$ \mmm{\CC}.

In Step 3, we use Theorem \ref{theo-GTodd} to obtain \eqref{e-applyGTodd-fl} using $O\big( (f d\log(d) +  N \log^2(d)) \varphi(f) \big)$ \mmm{\CC}.

In Step 4, for all $j \in J_f$, we compute $[s^{n(f \mid \a)-1-m}]\V_f(t; \a, j)$ for $1 \le t \le f$ using $O(f \varphi(f) )$ \mmm{\CC}.

In Step 5, we substitute all $[s^{n(f \mid \a)-1-m}]\V_f(t; \a, \zeta)$ into \eqref{e-Wave-3} to obtain $W_f(t; \a)$ using $O(d)$ \mmm{\CC}.

The total complexity is the sum of all the above steps.
\end{proof}

\begin{exam}
The results of Example \ref{ex-136} by Algorithm \texttt{Float-Denum} (by setting digits $=10$) are as follows:
\begin{align*}
W_1(t; \a) \approx [&0.02777777778\ t^2 + 0.2777777778\ t + 0.5879629633]; \\
W_2(t; \a) \approx [&-0.04166666667, 0.04166666667] \\
W_3(t; \a) \approx [&-3.182178376 \times 10^{-12}\ t - 0.01851851856, -0.05555555556\ t - 0.2685185186, \\ &0.05555555554\ t + 0.2870370370] \\
W_6(t; \a) \approx [&0.1666666667, 0.08333333340, -0.08333333332, -0.1666666666, \\
&-0.08333333330, 0.08333333328].
\end{align*}
Now taking $t=14$, we will obtain $d(t; \a) \approx 9.000000001$. It differs from the exact value $9$ by $10^{-9}$.
Readers can verify other values.

\end{exam}

\begin{rem}
The errors of Algorithm \texttt{Float-Denum} are related to the digits we set. For instance, the exact value of $d(1789682; 1,3,6)$ is equal to $88971554961$.
If we set digits $=10$, then we obtain $d(1789682; 1,3,6) \approx 8.897155496\times 10^{10}$. The error reaches $1$.
But if we set digits $=20$, then we will obtain $d(1789682; 1,3,6) \approx 8.8971554961000000001 \times 10^{10}$ with error $10^{-9}$, but the running time increases.
\end{rem}

\section{Concluding remark}\label{s-cr}
We have developed a fast algorithm for the Sylvester wave when $a_i\leq 500$ for all $i$. 
From Theorem \ref{theo-flca} one sees that when $a_i\leq C$ for a constant $C$, then the Sylvester wave can be computed in polynomial
in $N$. This is a nice addition to Baldoni et al.'s result (only) on top coefficients.

Algorithm \texttt{Cyc-Denum} can be adapted to give a fast algorithm for the partial fraction decomposition of a proper rational function whose denominator
is the same as that in \eqref{e-generatingfunc}. 

The idea is also adapted to give a fast algorithm for Ehrhart series in an upcoming paper \cite{2024-Ehrhart}. 

\medskip
\noindent \textbf{Acknowledgments:}
This work was supported by the National Natural Science Foundation of China (No. 12071311).

\end{document}